\newcolumntype{L}{>{\RaggedRight\arraybackslash}X}
\title{On the dimensions of certain spaces of vector-valued cusp forms}
\author{
    Darshan Nasit\\
    Department of Mathematics\\
    Indian Institute of Science Education and Research Pune\\
    Pune, India. 411008 \\
  \texttt{nasit.darshan@students.iiserpune.ac.in} }
\date{}
\newtheorem{theo}{{\bf{Theorem}}}
\newtheorem{prop}[theo]{{\bf Proposition}}
\newtheorem{lem}[theo]{{\bf Lemma}}
\newtheorem{coro}[theo]{{\bf Corollary}}
\newtheorem{rmk}{{\bf{Remark}}}
\begin{document}

\maketitle

\begin{abstract}
    Given an irreducible representation of $SL_2(\mathbb{F}_q)$ for an odd prime $q\geq5$, we find the dimension of the space of cusp forms with respect to the full modular group taking values in the representation space. The dimension equals the multiplicity of the representation in the space of classical cusp forms with respect to the principal congruence subgroup of level $q$.
\end{abstract}

%\tableofcontents

\section{Introduction}

Let $\mathcal{H}$ be the complex upper half plane and $\Gamma$ be a finite index subgroup of $SL_2(\mathbb{Z})$. Let $\rho:\Gamma\rightarrow GL(V)$ be a finite-dimensional complex representation such that $\Gamma'=\ker\rho$ has a finite index in $\Gamma$. A function $f:\mathcal{H}\rightarrow V$ is holomorphic cuspidal if each coordinate $f_i:\mathcal{H}\rightarrow\mathbb{C}$ is holomorphic cuspidal (i.e., holomorphic and $\int_{0}^{1}f_i(x+\iota y)dx=0$ for any $y>0$). For an integer $k\geq 0$, the space of cusp forms of weight $k$ for a subgroup $\Gamma$ in the representation $(\rho,V)$ is 
$$\mathcal{S}_k(\Gamma,\rho)=\big\{f:\mathcal{H} \rightarrow V  \text{ holomorphic cuspidal such that } f(\gamma z)=j(\gamma,z)^k\rho(\gamma)(f(z)),\textbf{ }
 \forall\gamma\in\Gamma\big\},$$ where $j$ is the usual automorphy factor.
See Chapter 8 of \cite{SHI} for more details.

It is interesting to find the dimension of $\mathcal{S}_k(\Gamma,\rho)$ since the space of cusp forms (holomorphic and antiholomorphic) $\mathcal{S}_k(\Gamma,\rho)\oplus \overline{\mathcal{S}_k(\Gamma,\rho)}$ is isomorphic to the parabolic cohomology $H^1_P(\Gamma, \rho\otimes\operatorname{Sym}^{k+2}(\mathbb{C}^2))$, via the Eichler-Shimura Isomorphism (Theorem 8.4 in \cite{SHI}).

Fix an odd prime $q\geq5$ and let $\mathbb{F}:=\mathbb{Z}/q\mathbb{Z}$ denote the finite field of order $q$. We have an irreducible representation of $SL_2(\mathbb{Z})$ naturally for an irreducible representation $(\rho, V_{\rho})$ of $SL_2(\mathbb{F})$ as the composition $$SL_2(\mathbb{Z})\rightarrow SL_2(\mathbb{F})\rightarrow GL(V_{\rho}).$$
We will denote the above representation of $SL_2(\mathbb{Z})$ also by $(\rho,V_{\rho})$. Since $SL_2(\mathbb{F})$ is a finite group, the kernel of this representation has a finite index in $SL_2(\mathbb{Z})$.

Any irreducible representation of $SL_2(\mathbb{F})$ is either a subrepresentation of a principal series representation or a cuspidal representation. A principal series representation $\rho(\alpha)$ is characterized by a character $\alpha$ of $\mathbb{F}^*$. It is an irreducible representation of $SL_2(\mathbb{F})$ unless $\alpha$ is either a trivial or quadratic character. If $\alpha={\bf{1}}$ is the trivial then $\rho({\bf{1}})$ decomposes into the direct sum of the trivial representation and the Steinberg representation. If $\alpha=\zeta_e$ is a quadratic character, then $\rho(\zeta_e)=\rho(\zeta_e)^+\oplus \rho(\zeta_e)^-;$ as decomposition into inequivalent irreducible representations. Let $\mathbb{E}$ be a quadratic extension of $\mathbb{F}$ and let $\mathbb{E}_1^*$ be the subgroup of $\mathbb{E}^*$ consisting of norm $1$ elements. For a nontrivial character $\tau$ of $\mathbb{E}_1^*$, one can attach a cuspidal representation $\omega(\tau)$. A cuspidal representation $\omega(\tau)$ is irreducible unless $\tau$ is a quadratic character. Similar to the case of a principal series representation, if $\tau=\zeta_0$ is a quadratic character, then $\omega(\zeta_0)=\omega(\zeta_0)^+\oplus \omega(\zeta_0)^-;$ as decomposition into inequivalent irreducible representations.  We can view $\mathbb{E}_1^*$ as a subgroup of $SL_2(\mathbb{F})$ and let $\Gamma_{\mathbb{E}}(q)\subset SL_2(\mathbb{Z})$ be the inverse image of $\mathbb{E}_1^*\subset SL_2(\mathbb{F})$. Let $\overline{\alpha}$ denote the complex conjugation of a character $\alpha$. Let $\Gamma(q)$ be the principal congruence subgroup of level $q$, $\Gamma_1(q)$ and $\Gamma_0(q)$ be the standard congruence subgroups. Let $\psi_n$ denote the character on $\mathbb{{F}} $ defined as $x\mapsto e^{2\pi\iota \frac{nx}{q}}$ for any $n\in \mathbb{F}$. The main results of this paper are as follows.

\begin{theo}\label{theo1}
    For the principal series representation $\rho(\alpha)$, $$\dim \mathcal{S}_k(SL_2(\mathbb{Z}),\rho(\alpha)) = \begin{cases}\dim \mathcal{S}_k(\Gamma_0(q),\alpha) & \alpha(-1)=(-1)^k \\ 0 & \alpha(-1)\neq (-1)^k.\end{cases}$$
\end{theo}

\begin{coro}\label{coro2}
    For the Steinberg representation, $$\dim \mathcal{S}_k(SL_2(\mathbb{Z}),{\bf{St}}) = \begin{cases}\dim \mathcal{S}_k(\Gamma_0(q))-\dim \mathcal{S}_k(SL_2(\mathbb{Z})) & k \text{ is even}, \\ 0 & k \text{ is odd}.\end{cases}$$
\end{coro}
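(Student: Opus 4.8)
The plan is to deduce this directly from Theorem~\ref{theo1} applied to the trivial character $\alpha={\bf 1}$, together with the decomposition of $\rho({\bf 1})$ recorded in the introduction. First I would invoke that, by definition, the principal series attached to the trivial character splits as $\rho({\bf 1}) = {\bf 1}\oplus{\bf St}$, the direct sum of the trivial and the Steinberg representations of $SL_2(\mathbb{F})$, both pulled back to $SL_2(\mathbb{Z})$.

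Next I would establish additivity of the cusp-form construction over direct sums. If $(\rho,V)=(\rho_1,V_1)\oplus(\rho_2,V_2)$, then writing $f=(f_1,f_2)$ according to $V=V_1\oplus V_2$, the automorphy relation $f(\gamma z)=j(\gamma,z)^k\rho(\gamma)f(z)$ holds if and only if each $f_i$ satisfies the corresponding relation for $\rho_i$, and $f$ is holomorphic cuspidal if and only if each component is. Hence $\mathcal{S}_k(\Gamma,\rho_1\oplus\rho_2)=\mathcal{S}_k(\Gamma,\rho_1)\oplus\mathcal{S}_k(\Gamma,\rho_2)$, so the dimensions add. Applied to $\rho({\bf 1})={\bf 1}\oplus{\bf St}$ this gives
\[
\dim\mathcal{S}_k(SL_2(\mathbb{Z}),\rho({\bf 1})) = \dim\mathcal{S}_k(SL_2(\mathbb{Z}),{\bf 1}) + \dim\mathcal{S}_k(SL_2(\mathbb{Z}),{\bf St}).
\]

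Then I would identify the two explicit terms. The trivial representation pulls back to the trivial representation of $SL_2(\mathbb{Z})$, so $\mathcal{S}_k(SL_2(\mathbb{Z}),{\bf 1})$ is exactly the classical scalar space $\mathcal{S}_k(SL_2(\mathbb{Z}))$. For the left-hand side I apply Theorem~\ref{theo1} with $\alpha={\bf 1}$, noting ${\bf 1}(-1)=1$. When $k$ is even we have $\alpha(-1)=(-1)^k$, so $\dim\mathcal{S}_k(SL_2(\mathbb{Z}),\rho({\bf 1}))=\dim\mathcal{S}_k(\Gamma_0(q),{\bf 1})=\dim\mathcal{S}_k(\Gamma_0(q))$, the nebentypus being trivial. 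Solving for the Steinberg term then yields $\dim\mathcal{S}_k(SL_2(\mathbb{Z}),{\bf St})=\dim\mathcal{S}_k(\Gamma_0(q))-\dim\mathcal{S}_k(SL_2(\mathbb{Z}))$.

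Finally, for odd $k$ I would note $\alpha(-1)=1\neq -1=(-1)^k$, so Theorem~\ref{theo1} forces $\dim\mathcal{S}_k(SL_2(\mathbb{Z}),\rho({\bf 1}))=0$; since $-I\in SL_2(\mathbb{Z})$ also makes $\mathcal{S}_k(SL_2(\mathbb{Z}))=0$ in odd weight, the additivity identity gives $\dim\mathcal{S}_k(SL_2(\mathbb{Z}),{\bf St})=0$, as claimed. Every step here is a direct consequence of Theorem~\ref{theo1} and elementary representation-theoretic bookkeeping, so there is no serious obstacle; the only points requiring care are confirming the compatibility of the cusp-form construction with direct sums and the identification $\mathcal{S}_k(\Gamma_0(q),{\bf 1})=\mathcal{S}_k(\Gamma_0(q))$.
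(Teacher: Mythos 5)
Your proposal is correct and follows essentially the same route as the paper: decompose $\rho({\bf 1})={\bf 1}\oplus{\bf St}$, use additivity of $\mathcal{S}_k(SL_2(\mathbb{Z}),-)$ over direct sums, and apply Theorem~\ref{theo1} with $\alpha={\bf 1}$ to solve for the Steinberg term. You also spell out the odd-weight case, which the paper leaves implicit.
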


\begin{prop}\label{prop3}
    For the quadratic character $\zeta_e$ on $\mathbb{F}^*$, if $\zeta_e(-1)\neq (-1)^k$ then $\mathcal{S}_k(SL_2(\mathbb{Z}),\rho(\zeta_e)^{\pm})=0$, and if $\zeta_e(-1)= (-1)^k$ then $$\dim \mathcal{S}_k(SL_2(\mathbb{Z}),\rho(\zeta_e)^{\pm}) =  \textbf{ } \frac{1}{2} \dim \mathcal{S}_k(\Gamma_0(q),\zeta_e)\pm \frac{\zeta_e(-1)}{(q-1)}\sum_{n\in\mathbb{F}^*}\zeta_e(n) \dim \mathcal{S}_k(\Gamma_1(q),\psi_n).$$
\end{prop}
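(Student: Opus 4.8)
The plan is to reduce everything to the representation theory of $G=SL_2(\mathbb F)$ acting on the finite–dimensional space $\mathcal S_k(\Gamma(q))$ of classical cusp forms, and then to separate the two constituents by a character computation. Throughout let $G=SL_2(\mathbb F)$, so $|G|=q(q^2-1)$, and let $G$ act on $\mathcal S_k(\Gamma(q))$ through the weight-$k$ slash operators of coset representatives of $SL_2(\mathbb Z)/\Gamma(q)\cong G$; write $\chi_{\mathcal S}$ for the resulting character. The engine is the identity $\dim\mathcal S_k(SL_2(\mathbb Z),\rho)=\langle\chi_{\mathcal S},\chi_\rho\rangle_G$, expressing the vector-valued dimension as the multiplicity of $\rho$ in $\mathcal S_k(\Gamma(q))$; this is exactly the mechanism underlying Theorem \ref{theo1}.

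First I would dispose of the two easy inputs. The vanishing statement is immediate: any $f\in\mathcal S_k(SL_2(\mathbb Z),\rho(\zeta_e)^{\pm})$ satisfies $f=f(-Iz)=(-1)^k\rho(\zeta_e)^{\pm}(-I)f=(-1)^k\zeta_e(-1)f$, since $-I$ is central and acts on $\rho(\zeta_e)^{\pm}$ by the scalar $\zeta_e(-1)$; hence the space is zero unless $\zeta_e(-1)=(-1)^k$. In the remaining case, additivity of $\rho\mapsto\mathcal S_k(SL_2(\mathbb Z),\rho)$ on direct sums together with $\rho(\zeta_e)=\rho(\zeta_e)^+\oplus\rho(\zeta_e)^-$ and Theorem \ref{theo1} applied to the (reducible) principal series $\rho(\zeta_e)$ gives the sum $\dim\mathcal S_k(SL_2(\mathbb Z),\rho(\zeta_e)^+)+\dim\mathcal S_k(SL_2(\mathbb Z),\rho(\zeta_e)^-)=\dim\mathcal S_k(\Gamma_0(q),\zeta_e)$. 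It therefore suffices to compute the difference $D:=\dim\mathcal S_k(SL_2(\mathbb Z),\rho(\zeta_e)^+)-\dim\mathcal S_k(SL_2(\mathbb Z),\rho(\zeta_e)^-)$ and then solve the resulting $2\times2$ linear system.

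The heart of the argument is the difference character $\Delta:=\chi_{\rho(\zeta_e)^+}-\chi_{\rho(\zeta_e)^-}$. I would first record that $\Delta$ vanishes on the central, split-semisimple and nonsplit-semisimple classes and is supported on the four unipotent and negative-unipotent classes, where, with $u_c=\begin{pmatrix}1&c\\0&1\end{pmatrix}$ and the quadratic Gauss sum $g=\sum_{n\in\mathbb F^*}\zeta_e(n)e^{2\pi\iota n/q}$ (so $g^2=\zeta_e(-1)q$), and after labeling the constituents appropriately, one has $\Delta(u_c)=\zeta_e(c)g$ and $\Delta(-u_c)=\zeta_e(-1)\zeta_e(c)g$ for $c\in\mathbb F^*$; this is the standard splitting of the reducible principal series, the factor $\zeta_e(-1)$ on the trace-$(-2)$ classes reflecting the central character. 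Then $D$, obtained by pairing $\chi_{\mathcal S}$ with $\Delta$, collapses to a sum over these four classes. To evaluate $\chi_{\mathcal S}(u_c)$ I would decompose $\mathcal S_k(\Gamma(q))$ into eigenspaces of the translation operator $T=u_1$: since $T^q$ acts trivially, $\mathcal S_k(\Gamma(q))=\bigoplus_{n\in\mathbb F}\mathcal S_k(\Gamma_1(q),\psi_n)$ with $T$ acting by $e^{2\pi\iota n/q}$ on the $\psi_n$-summand, whence $\chi_{\mathcal S}(u_c)=\mathrm{tr}(T^c)=\sum_{n\in\mathbb F}e^{2\pi\iota nc/q}\dim\mathcal S_k(\Gamma_1(q),\psi_n)$ and $\chi_{\mathcal S}(-u_c)=(-1)^k\chi_{\mathcal S}(u_c)$.

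The final step is the bookkeeping that turns these traces into the advertised weighted sum. Conjugation by the diagonal torus sends $u_c$ to $u_{a^2c}$, so both $\chi_{\mathcal S}(u_c)$ and $a_n:=\dim\mathcal S_k(\Gamma_1(q),\psi_n)$ depend only on the square class of their argument; writing $\nu$ for a fixed nonsquare, this yields $\sum_{n\in\mathbb F^*}\zeta_e(n)a_n=\tfrac{q-1}{2}(a_1-a_\nu)$ and, after splitting the character sums over squares and nonsquares, $\mathrm{tr}(T)-\mathrm{tr}(T^\nu)=g\,(a_1-a_\nu)$. Substituting into the expression for $D$, using $|C_{u_1}|=|C_{u_\nu}|=(q^2-1)/2$, the relation $g^2=\zeta_e(-1)q$, and the hypothesis $\zeta_e(-1)=(-1)^k$ to combine the trace-$2$ and trace-$(-2)$ contributions, produces $D=\frac{2\zeta_e(-1)}{q-1}\sum_{n\in\mathbb F^*}\zeta_e(n)a_n$; together with the sum computed above this gives the two displayed formulas. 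I expect the main obstacle to be the third step: pinning down the exact unipotent values of $\Delta$ together with the correct conjugation convention in the pairing $\langle\chi_{\mathcal S},\Delta\rangle_G$, since it is precisely the interplay of $g^2=\zeta_e(-1)q$ (rather than $|g|^2=q$) with the self-duality pattern of the two constituents that manufactures the arithmetically crucial factor $\zeta_e(-1)$.
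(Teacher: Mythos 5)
Your proposal is correct, and it reaches the stated formula by a route that is organized differently from the paper's, even though both run on the same engine (the identification of $\dim\mathcal{S}_k(SL_2(\mathbb{Z}),\rho)$ with a multiplicity in the $SL_2(\mathbb{F})$-module $\mathcal{S}_k(\Gamma(q))$, which is the paper's Lemma \ref{lem7}). The paper evaluates the full inner product $\frac{1}{|G|}\sum_g\chi_\sigma(g)\chi_{\rho(\zeta_e)^{\pm}}(g)$ class by class for each sign separately: this forces it to handle the contributions of the identity, $-I$, and the split torus classes, and to invoke orthogonality of characters of $\mathbb{F}^*$ together with the explicit Gauss-sum evaluation $\sum_{x\in\mathbb{F}^*}\zeta_e(x)\psi_n(x)=\zeta_e(n)\sqrt{q\zeta_e(-1)}$ to make those terms collapse. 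You instead compute the sum of the two dimensions by citing Theorem \ref{theo1} for the reducible principal series $\rho(\zeta_e)$ (legitimate: the paper's $\mathcal{F},\mathcal{G}$ isomorphism never uses irreducibility, and the paper itself exploits this for the Steinberg corollary), and the difference by pairing $\chi_\sigma$ against the virtual character $\chi_{\rho(\zeta_e)^+}-\chi_{\rho(\zeta_e)^-}$, which is supported only on the four unipotent classes. This buys a shorter computation (the central and torus classes never appear) at the cost of needing the square-class invariance of $n\mapsto\dim\mathcal{S}_k(\Gamma_1(q),\psi_n)$, which is exactly the paper's Lemma \ref{lem8}; your verification $D=\zeta_e(-1)(a_1-a_\nu)=\frac{2\zeta_e(-1)}{q-1}\sum_n\zeta_e(n)a_n$ checks out against the paper's answer. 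The one point you defer --- ``labeling the constituents appropriately'' --- is genuinely the only delicate spot: by Lemma \ref{lem7} the dimension is the multiplicity of $\overline{\rho(\zeta_e)^{\pm}}$, and when $\zeta_e(-1)=-1$ one has $\overline{\rho(\zeta_e)^{+}}\cong\rho(\zeta_e)^{-}$, so the pairing must be taken with $\chi_\rho$ unconjugated (as the paper does) for the $\pm$ in your formula to track the paper's normalization $\zeta_e^{\pm}(1)=\frac{1}{2}(1\pm\sqrt{q\zeta_e(-1)})$; with that convention fixed, your argument is complete.
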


\begin{theo}\label{theo4}
    For cuspidal representation $\omega(\tau)$, if $\tau(-1)\neq (-1)^k$ then $\mathcal{S}_k(SL_2(\mathbb{Z}),\omega(\tau))=0$, and if $\tau(-1)=(-1)^k$ then $$\dim \mathcal{S}_k(SL_2(\mathbb{Z}),\omega(\tau)) = \frac{k-1}{12}(q^2-1)-\frac{1}{2}(q-1) - \frac{1}{2}\dim \mathcal{S}_k(\Gamma_{\mathbb{E}}(q),\overline{\tau}) - \frac{1}{2}\dim \mathcal{S}_k(\Gamma_{\mathbb{E}}(q),\tau).$$
\end{theo}

\begin{prop}\label{prop5}
    If $\zeta_0(-1)\neq(-1)^k$ then $\mathcal{S}_k(SL_2(\mathbb{Z}),\omega(\zeta_0)^{\pm})=0$, and if $\zeta_0(-1)=(-1)^k$ then 
    \begin{align*}
        \dim \mathcal{S}_k(SL_2(\mathbb{Z}),\omega(\zeta_0)^{\pm}) = & \textbf{ } \frac{k-1}{24}(q^2-1)-\frac{1}{4}(q-1)  - \frac{1}{2}\dim \mathcal{S}_k(\Gamma_{\mathbb{E}}(q),\zeta_0) \\ & \textbf{ } \textbf{ } \pm \frac{\zeta_e(-1)}{q-1}\sum_{n\in\mathbb{{F}^*}} \zeta_e(n)\mathcal{S}_k(\Gamma_1(q),\psi_n).
    \end{align*}    
\end{prop}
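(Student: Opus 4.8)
The plan is to deduce Proposition~\ref{prop5} from Theorem~\ref{theo4} and Proposition~\ref{prop3} by combining the decomposition $\omega(\zeta_0)=\omega(\zeta_0)^+\oplus\omega(\zeta_0)^-$ with a short character computation in $G:=SL_2(\mathbb{F})$. Throughout I use the identification, on which the whole paper rests, of $\dim\mathcal{S}_k(SL_2(\mathbb{Z}),\rho)$ with the multiplicity $\langle\rho,\mathcal{S}_k(\Gamma(q))\rangle$ of $\rho$ in the $G$-module $\mathcal{S}_k(\Gamma(q))$; write $\Phi_k$ for the character of this module. Since $-I$ acts on $\mathcal{S}_k(\Gamma(q))$ by $(-1)^k$, one has $\Phi_k(-g)=(-1)^k\Phi_k(g)$ for every $g\in G$, while $\omega(\zeta_0)^{\pm}(-I)=\zeta_0(-1)$. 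Hence if $\zeta_0(-1)\neq(-1)^k$ no constituent of $\omega(\zeta_0)$ can occur in $\mathcal{S}_k(\Gamma(q))$ and both spaces vanish; so assume $\zeta_0(-1)=(-1)^k$.

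For the part of the formula independent of the sign, additivity of multiplicities gives $\dim\mathcal{S}_k(SL_2(\mathbb{Z}),\omega(\zeta_0)^+)+\dim\mathcal{S}_k(SL_2(\mathbb{Z}),\omega(\zeta_0)^-)=\dim\mathcal{S}_k(SL_2(\mathbb{Z}),\omega(\zeta_0))$. Applying Theorem~\ref{theo4} with $\tau=\zeta_0$, and using $\overline{\zeta_0}=\zeta_0$ because $\zeta_0$ is quadratic, the right-hand side equals $\frac{k-1}{12}(q^2-1)-\frac12(q-1)-\dim\mathcal{S}_k(\Gamma_{\mathbb{E}}(q),\zeta_0)$. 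Half of this is precisely the sign-independent part of the asserted formula, so everything reduces to computing the difference $D:=\dim\mathcal{S}_k(SL_2(\mathbb{Z}),\omega(\zeta_0)^+)-\dim\mathcal{S}_k(SL_2(\mathbb{Z}),\omega(\zeta_0)^-)=\langle\delta,\Phi_k\rangle$, where $\delta:=\chi_{\omega(\zeta_0)^+}-\chi_{\omega(\zeta_0)^-}$, and showing that $\tfrac12 D$ equals the correction term.

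The heart of the argument is a comparison of two class functions on $G$. From the character table of $SL_2(\mathbb{F})$, the two halves of a split cuspidal representation coincide away from the unipotent classes, so $\delta$ is supported on the classes of $c=\left(\begin{smallmatrix}1&1\\0&1\end{smallmatrix}\right)$, $c'=\left(\begin{smallmatrix}1&\nu\\0&1\end{smallmatrix}\right)$ (with $\nu$ a fixed non-square) together with $-c,-c'$, and there $\delta(c)=g(\zeta_e)$, $\delta(c')=-g(\zeta_e)$ for the quadratic Gauss sum $g(\zeta_e)=\sum_{n\in\mathbb{F}^*}\zeta_e(n)\psi_1(n)$ (the labelling of $\pm$ is fixed to make these signs hold). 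On the other hand, let $U\cong\mathbb{F}$ be the image of $\Gamma_1(q)$ in $G$ and put $\Psi:=\sum_{n\in\mathbb{F}^*}\zeta_e(n)\,\mathrm{Ind}_U^G\psi_n$. The evaluation of $\mathrm{Ind}_U^G\psi_n$ already performed in the proof of Proposition~\ref{prop3} shows that $\Psi$ is supported on $c,c'$ with $\Psi(c)=(q-1)g(\zeta_e)$ and $\Psi(c')=-(q-1)g(\zeta_e)$; thus $\Psi=(q-1)\,\delta$ on the unipotent classes. Now $\delta(-c)=\zeta_0(-1)\delta(c)=(-1)^k\delta(c)$ and $\Phi_k(-c)=(-1)^k\Phi_k(c)$ (and likewise for $c'$), so the classes $-c,-c'$ contribute to $\langle\delta,\Phi_k\rangle$ exactly what $c,c'$ do; comparing with the support of $\Psi$ gives $\langle\delta,\Phi_k\rangle=\frac{2}{q-1}\langle\Psi,\Phi_k\rangle$.

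It remains to turn $\langle\Psi,\Phi_k\rangle$ into the stated sum. Frobenius reciprocity identifies $\langle\mathrm{Ind}_U^G\psi_n,\Phi_k\rangle$ with $\dim\mathcal{S}_k(\Gamma_1(q),\psi_{-n})$, the conjugate character $\overline{\psi_n}=\psi_{-n}$ appearing exactly as in the proof of Proposition~\ref{prop3}; reindexing $n\mapsto-n$ and using $\zeta_e(-n)=\zeta_e(-1)\zeta_e(n)$ gives $\langle\Psi,\Phi_k\rangle=\zeta_e(-1)\sum_{n\in\mathbb{F}^*}\zeta_e(n)\dim\mathcal{S}_k(\Gamma_1(q),\psi_n)$, whence $\tfrac12 D=\frac{\zeta_e(-1)}{q-1}\sum_{n\in\mathbb{F}^*}\zeta_e(n)\dim\mathcal{S}_k(\Gamma_1(q),\psi_n)$. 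Since $\dim\mathcal{S}_k(SL_2(\mathbb{Z}),\omega(\zeta_0)^{\pm})$ equals the sign-independent part $\pm\tfrac12 D$, this is exactly the claimed formula. I expect the main obstacle to be the character bookkeeping of the third and fourth steps: extracting the precise Gauss-sum values of $\delta$ on the unipotent classes with a normalisation consistent with the chosen labelling of $\omega(\zeta_0)^{\pm}$, and tracking the conjugation $\overline{\psi_n}=\psi_{-n}$ through Frobenius reciprocity so that a single factor $\zeta_e(-1)$ survives in the correction term.
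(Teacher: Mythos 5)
Your proposal is correct, and it reaches the stated formula by a genuinely different organization of the computation than the paper's. The paper's proof of Proposition~\ref{prop5} is the one-line instruction ``we follow the same method'': plug the row for $\omega(\zeta_0)^{\pm}$ from Table~\ref{character table} and the values of $\chi_{\sigma}$ from Table~\ref{table} directly into the inner product supplied by Lemma~\ref{lem7} and simplify, exactly as was done for $\rho(\zeta_e)^{\pm}$ in Section~5.1, the quadratic Gauss sum appearing when $\zeta_0^{\pm}(\gamma)$ is paired against $\sum_n\psi_n(\gamma)\dim\mathcal{S}_k(\Gamma_1(q),\psi_n)$. You instead split the multiplicity into the sum of the two pieces (recovered from Theorem~\ref{theo4} at $\tau=\zeta_0$, using $\overline{\zeta_0}=\zeta_0$) and the difference $D=\langle\delta,\chi_{\sigma}\rangle$, which you localize to the four classes $(-1)^{\eta}\left(\begin{smallmatrix}1&\gamma\\0&1\end{smallmatrix}\right)$ and evaluate by comparison with $\Psi=\sum_{n}\zeta_e(n)\,\mathrm{Ind}_U^G\psi_n$. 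I checked the two nontrivial identities: $\langle\delta,\chi_{\sigma}\rangle=\frac{2}{q-1}\langle\Psi,\chi_{\sigma}\rangle$ (the $\eta=1$ classes fold into the $\eta=0$ ones via $\zeta_0(-1)=(-1)^k$ and Remark~\ref{rmk4}), and the final value $\tfrac12 D=\frac{\zeta_e(-1)}{q-1}\sum_{n\in\mathbb{F}^*}\zeta_e(n)\dim\mathcal{S}_k(\Gamma_1(q),\psi_n)$, which agrees with the direct computation since $\dim\mathcal{S}_k(\Gamma_1(q),\psi_n)$ depends only on the square class of $n$. Two points should be made explicit. First, Theorem~\ref{theo4} is stated for the cuspidal representation $\omega(\tau)$, which at $\tau=\zeta_0$ is reducible; you need the remark that the proof of Theorem~\ref{theo4} is a pure character computation and that the $\omega(\tau)$ row of Table~\ref{character table} specializes at $\tau=\zeta_0$ to $\chi_{\omega(\zeta_0)^+}+\chi_{\omega(\zeta_0)^-}$, so the formula does hold there. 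Second, your pairing must be the bilinear one $\frac{1}{|G|}\sum_g\chi(g)\chi'(g)$ with no conjugation, which is what Lemma~\ref{lem7} actually produces; with the Hermitian inner product Frobenius reciprocity would return $\dim\mathcal{S}_k(\Gamma_1(q),\psi_{n})$ rather than $\dim\mathcal{S}_k(\Gamma_1(q),\psi_{-n})$ and the factor $\zeta_e(-1)$ would be lost, so the convention deserves a sentence. What your route buys is that the sign-dependent correction is isolated cleanly and the dependence on the labelling of $\pm$ is visibly confined to the choice of square root in $\zeta_0^{+}(1)-\zeta_0^{-}(1)=\sqrt{q\epsilon}$; what the paper's route buys is uniformity with the computations already carried out for the other cases.
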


We prove these results by using the character of a certain representation of $SL_2(\mathbb{F})$ in the space of cusp forms $\mathcal{S}_k(\Gamma(q))$. Since $\Gamma(q)$ is a normal subgroup of $\Gamma(1)=SL_2(\mathbb{Z})$, there is a natural homomorphism $$\sigma: SL_2(\mathbb{Z})\rightarrow GL(\mathcal{S}_k(\Gamma(q))),$$ defined as $[\sigma(g)f](z)= j(g^{-1},z)^{-k}f(g^{-1}z)$. Since $\sigma$ is a representation with $\Gamma(q)\subset \ker(\sigma)$, it  factors through a representation of $SL_2(\mathbb{F})$. Suppose $\sigma$ decomposes as, $$\sigma=\bigoplus m_{\rho}\rho,$$ where the direct sum runs over the inequivalent classes of irreducible representations of $SL_2(\mathbb{F})$, and $m_{\rho}$ is the multiplicity of the irreducible representation $\rho$ in $\sigma$. We will show that the multiplicities are also related to the dimensions of the spaces of cusp forms (see Lemma \ref{lem7}).

In Section 2, we will relate the dimension of the space of cusp forms and the multiplicity of representation in the decomposition of $\sigma$. We will recall the classification of representations and the character table of $SL_2(\mathbb{F})$ in Section 3. In Section 4, we will compute the character of $\sigma$. In Sections 5 and 6, we will prove our main results using the character of $\sigma$.

\begin{rmk}
    If $\rho={\bf{1}}$ is the trivial representation then dimensions of $\mathcal{S}_k(\Gamma,{\bf{1}})=\mathcal{S}_k(\Gamma)$ have been calculated by using the Riemann-Roch theorem (\cite{SHI} and \cite{diamond}). When $\Gamma=\Gamma_0(N)$ and $\rho:\Gamma_0(N)/\Gamma_1(N)\rightarrow \mathbb{C}^*$ is a character then the dimension formulas for $\mathcal{S}_k(\Gamma,\rho)$ were given by H.\,Cohen and J.\,Oesterl\'{e} in 1977 (\cite{cohen}) and Jordi Quer in 2010 (\cite{Quer}).
    To the best of my knowledge, the explicit formulas for $\dim \mathcal{S}_k(\Gamma_1(q),\psi_n)$ and $\dim \mathcal{S}_k(\Gamma_{\mathbb{E}}(q),\tau)$ are not well-known. The future work on this project aims to find the explicit formulas for $\dim\mathcal{S}_k(\Gamma,\chi)$ for a congruence subgroup $\Gamma$ and a character $\chi:\Gamma\rightarrow\mathbb{C}^*$.
\end{rmk}

\section*{Acknowledgement}
\small{The author sincerely thanks A.\,Raghuram for suggesting the problem and engaging in helpful conversations. The author is grateful to Chandrasheel Bhagwat for his suggestions on the manuscript. He is thankful to the Institute for Advanced Study, Princeton, for a summer collaborator's grant in 2023; and to other members of this summer collaboration: A.\,Raghuram, Baskar Balasubramanyam, Chandrasheel Bhagwat, Freydoon Shahidi, and P. Narayanan. The author acknowledges support from the Prime Minister's Research Fellowship (PMRF) at IISER Pune.}

\section{Dimension and Multiplicity}\label{sec2}
Let $\mathcal{S}_k(\Gamma(q),\rho)$ be the space of classical cusp forms taking values in $V_{\rho}$ and let $$\theta:SL_2(\mathbb{Z})\rightarrow GL(\mathcal{S}_k(\Gamma(q),\rho))$$ be a representation of $SL_2(\mathbb{Z})$ defined as $[\theta(g)f](z)= j(g^{-1},z)^{-k}\rho(g)\cdot f(g^{-1}z)$.

\begin{lem}\label{lem6}
     $\mathcal{S}_k(\Gamma(q),\rho)$ is equivalent, as a representation of $SL_2(\mathbb{Z})$, to the tensor product of  $\mathcal{S}_k(\Gamma(q))\otimes V_{\rho}$, i.e., $\theta\sim \sigma\otimes \rho$. Moreover, $\chi_{\theta}=\chi_{\sigma}\cdot\chi_{\rho}$, where $\chi_{\theta},\chi_{\sigma}$ and $\chi_{\rho}$ are the characters of $\theta$, $\sigma$ and $\rho$, respectively.
\end{lem}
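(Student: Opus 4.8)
The plan is to exhibit an explicit $SL_2(\mathbb{Z})$-equivariant isomorphism between the two spaces and then read off the character identity from the multiplicativity of characters under tensor products.

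First I would record the structural observation that makes everything work: since $\rho$ is inflated from $SL_2(\mathbb{F}) = SL_2(\mathbb{Z})/\Gamma(q)$, its restriction to $\Gamma(q)$ is trivial. Hence for $f \in \mathcal{S}_k(\Gamma(q), \rho)$ the defining transformation law reads $f(\gamma z) = j(\gamma, z)^k f(z)$ for all $\gamma \in \Gamma(q)$, with no twist by $\rho$. Fixing a basis $e_1, \dots, e_d$ of $V_\rho$ and writing $f = \sum_i f_i e_i$, this says precisely that each coordinate $f_i$ lies in the scalar space $\mathcal{S}_k(\Gamma(q))$. Thus, as a plain vector space, $\mathcal{S}_k(\Gamma(q), \rho)$ is the space of $V_\rho$-valued functions all of whose coordinates are classical cusp forms for $\Gamma(q)$.

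Next I would define the candidate isomorphism $\Phi \colon \mathcal{S}_k(\Gamma(q)) \otimes V_\rho \to \mathcal{S}_k(\Gamma(q), \rho)$ on pure tensors by $\Phi(f \otimes v)(z) = f(z)\, v$; the assignment $(f,v) \mapsto f(\cdot)\,v$ is bilinear, so it factors through the tensor product. A short check using $\rho|_{\Gamma(q)} = \mathrm{id}$ confirms that $\Phi(f \otimes v)$ satisfies the required transformation law and is holomorphic cuspidal, so $\Phi$ lands in the target; and the coordinate decomposition from the previous step furnishes an explicit two-sided inverse $F \mapsto \sum_i F_i \otimes e_i$, so $\Phi$ is a linear isomorphism.

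The heart of the argument, and the one computation I expect to require genuine care, is verifying that $\Phi$ intertwines $\sigma \otimes \rho$ with $\theta$, i.e. that $\Phi \circ (\sigma \otimes \rho)(g) = \theta(g) \circ \Phi$ for all $g$. Evaluating both sides on $f \otimes v$, the left side gives $[\sigma(g)f](z)\,\rho(g)v = j(g^{-1},z)^{-k} f(g^{-1}z)\,\rho(g)v$, while the right side gives $j(g^{-1},z)^{-k}\rho(g)\big(f(g^{-1}z)\,v\big)$. These agree because $f(g^{-1}z)$ is a scalar and therefore commutes past the linear operator $\rho(g)$. Implicit here is that both $\theta$ and $\sigma$ are genuinely well-defined representations, which rests on the normality of $\Gamma(q)$ in $SL_2(\mathbb{Z})$ together with a standard cocycle manipulation of the automorphy factor $j$; I would either cite this or dispatch it in a line. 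Having established $\theta \sim \sigma \otimes \rho$, the character statement is immediate: for every $g$ one has $\chi_\theta(g) = \chi_{\sigma \otimes \rho}(g) = \chi_\sigma(g)\,\chi_\rho(g)$, since the character of a tensor product is the product of the characters. The only real subtlety in the whole proof is the first observation that $\rho$ is trivial on $\Gamma(q)$, which collapses the twisted transformation law to the untwisted one and lets the coordinates be honest scalar cusp forms; everything else is bookkeeping.
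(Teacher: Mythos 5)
Your proposal is correct and follows essentially the same route as the paper: the same map $f\otimes v\mapsto f(\cdot)v$, the same use of $\rho|_{\Gamma(q)}=\mathrm{id}$ to identify the coordinates as scalar cusp forms, and the same intertwining computation (the paper deduces bijectivity from surjectivity plus a dimension count, whereas you write down the inverse explicitly, which is an immaterial difference). The character identity then follows in both cases from multiplicativity of characters under tensor products.
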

\begin{proof}
    Define a linear map $T:\mathcal{S}_k(\Gamma(q))\otimes V_{\rho}\rightarrow \mathcal{S}_k(\Gamma(q),\rho)$ as $[T(f\otimes v)](z)=f(z)v$. The map is well-defined since $\Gamma(q)\subset \ker(\rho)$. Let $\{e_1,\ldots, e_n\}$ is a basis of $V_{\rho}$ then for any $f\in \mathcal{S}_k(\Gamma(q),\rho)$, there exist holomorphic functions $f_1,\ldots, f_n:\mathcal{H}\rightarrow \mathbb{C}$ such that $f(z)=f_1(z)e_1+\cdots +f_n(z)e_n$. Since $\Gamma(q)\subset \ker(\rho)$, $f_i\in \mathcal{S}_k(\Gamma(q))$. Then $$T(f_1\otimes e_1+ \cdots+ f_n\otimes e_n)=f.$$ Hence, $T$ is a surjective map. Since $\dim \mathcal{S}_k(\Gamma(q),\rho) =\dim \mathcal{S}_k(\Gamma(q)) \times \dim V_{\rho}$, the linear map $T$ is an isomorphism. The lemma follows since for $f\in \mathcal{S}_k(\Gamma(q))$, $v\in V_{\rho}$ and $g\in SL_2(\mathbb{Z})$, we have
    \begin{align*}
        [\theta(g)\circ T (f\otimes v)](z) & = j(g^{-1},z)^{-k}\rho(g)\cdot [T(f\otimes v)](g^{-1}z)\\ & = j(g^{-1},z)^{-k}\rho(g) \cdot [f(g^{-1}z)v] \\ & = [\sigma(g)f](z)\rho(g)\cdot v \\ & = [T\circ \sigma\otimes \rho(g) (f\otimes v)](z).
    \end{align*}
\end{proof}
\begin{lem}\label{lem7}
    The dimension of $\mathcal{S}_k(SL_2(\mathbb{Z}),\rho)$ is equal to the multiplicity of the complex conjugation $\overline{\rho}$ of the representation $\rho$ in $\sigma$, i.e., $\dim \mathcal{S}_k(SL_2(\mathbb{Z}),\rho)=m_{\overline{\rho}}$.
\end{lem}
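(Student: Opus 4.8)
The plan is to identify $\mathcal{S}_k(SL_2(\mathbb{Z}),\rho)$ with the subspace of $\theta$-fixed vectors inside $\mathcal{S}_k(\Gamma(q),\rho)$, and then to read off the dimension of that fixed subspace using finite-group character theory together with Lemma~\ref{lem6}. Throughout, write $G=SL_2(\mathbb{F})$.

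First I would note the inclusion $\mathcal{S}_k(SL_2(\mathbb{Z}),\rho)\subseteq\mathcal{S}_k(\Gamma(q),\rho)$: since $\Gamma(q)\subseteq\ker\rho$, the law $f(\gamma z)=j(\gamma,z)^k\rho(\gamma)f(z)$ demanded for all $\gamma\in SL_2(\mathbb{Z})$ restricts on $\Gamma(q)$ to the law defining $\mathcal{S}_k(\Gamma(q),\rho)$, while the holomorphy and cuspidality conditions are the same for both spaces. The heart of the argument is to show that, for $f\in\mathcal{S}_k(\Gamma(q),\rho)$, the full $SL_2(\mathbb{Z})$-law is \emph{equivalent} to $\theta(\gamma)f=f$ for every $\gamma$. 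Starting from $[\theta(\gamma)f](z)=j(\gamma^{-1},z)^{-k}\rho(\gamma)f(\gamma^{-1}z)$, substituting $z=\gamma w$ and using the cocycle identity $j(\gamma^{-1},\gamma w)=j(\gamma,w)^{-1}$, the equation $\theta(\gamma)f=f$ becomes $f(\gamma w)=j(\gamma,w)^k\rho(\gamma)f(w)$, which is exactly the defining transformation law. Hence
$$\mathcal{S}_k(SL_2(\mathbb{Z}),\rho)=\mathcal{S}_k(\Gamma(q),\rho)^{G},$$
the space of $\theta$-invariants; the same cocycle computation (now with $\rho(\gamma)=\mathrm{id}$) shows $\theta(\gamma)=\mathrm{id}$ for $\gamma\in\Gamma(q)$, so $\theta$ indeed factors through the finite group $G$ and the invariants are those of a bona fide $G$-representation.

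Next I would apply the standard fact that for any representation $W$ of a finite group $G$ one has $\dim W^{G}=\langle\chi_W,\mathbf{1}\rangle=\tfrac{1}{|G|}\sum_{g\in G}\chi_W(g)$, the multiplicity of the trivial representation. Taking $W=\theta$ and using Lemma~\ref{lem6}, which gives $\chi_\theta=\chi_\sigma\cdot\chi_\rho$, yields
$$\dim\mathcal{S}_k(SL_2(\mathbb{Z}),\rho)=\frac{1}{|G|}\sum_{g\in G}\chi_\sigma(g)\,\chi_\rho(g).$$
It then remains to match this with the multiplicity $m_{\overline{\rho}}=\langle\chi_\sigma,\chi_{\overline{\rho}}\rangle=\tfrac{1}{|G|}\sum_{g\in G}\chi_\sigma(g)\,\overline{\chi_{\overline{\rho}}(g)}$. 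Because the character of a complex-conjugate representation is the complex conjugate of the character, $\chi_{\overline{\rho}}=\overline{\chi_\rho}$, so $\overline{\chi_{\overline{\rho}}(g)}=\chi_\rho(g)$ and the two expressions agree, proving $\dim\mathcal{S}_k(SL_2(\mathbb{Z}),\rho)=m_{\overline{\rho}}$.

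I expect the main obstacle to be the first step—correctly manipulating the automorphy cocycle to verify that $\theta$-invariance coincides with the $SL_2(\mathbb{Z})$-modularity condition, and confirming $\Gamma(q)\subseteq\ker\theta$ so that finite-group character theory is legitimately available. After that the argument is a routine invocation of character orthogonality, the only delicate point being the bookkeeping that produces $\overline{\rho}$ rather than $\rho$, which is precisely captured by $\chi_{\overline{\rho}}=\overline{\chi_\rho}$.
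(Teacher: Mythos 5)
Your proposal is correct and follows essentially the same route as the paper: identify $\mathcal{S}_k(SL_2(\mathbb{Z}),\rho)$ with the $\theta$-invariants, compute $\dim = \langle\chi_\theta,\mathbf{1}\rangle = \langle\chi_\sigma\chi_\rho,\mathbf{1}\rangle = \langle\chi_\sigma,\chi_{\overline{\rho}}\rangle = m_{\overline{\rho}}$ via Lemma~\ref{lem6} and the identity $\chi_{\overline{\rho}}=\overline{\chi_\rho}$. You simply spell out the cocycle verification that the paper leaves implicit.
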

\begin{proof}
    Since $\mathcal{S}_k(SL_2(\mathbb{Z}),\rho) = \big\{f\in \mathcal{S}_k(\Gamma(q), V_{\rho}): \theta(g)f=f, \text{  }\forall g\in SL_2(\mathbb{Z}) \big\}$, the dimension is equal to the multiplicity of the trivial representation in $\theta$. Then $$\dim \mathcal{S}_k(SL_2(\mathbb{Z}),V_{\rho}) = \langle \chi_{\theta},{\bf{1}}\rangle = \langle \chi_{\sigma}\chi_{\rho},{\bf{1}}\rangle = \langle \chi_{\sigma},\chi_{\overline{\rho}}\rangle=m_{\overline{\rho}}.$$
    Here, second equality follows from Lemma \ref{lem6}.   
\end{proof}

\section{The Character Table}\label{sec3}

This section is to recall the classification of irreducible representations of $SL_2(\mathbb{F})$ based on Section 4.1 of \cite{Bump}.\par
Let $\mathbb{E}$ be the quadratic extension of $\mathbb{F}$, then $\mathbb{E}=\mathbb{F}(\sqrt{\Delta})$ for $\Delta\notin \mathbb{F}^{*2}$. We choose $\Delta\in\mathbb{F}$ such that $\Delta$ generates $\mathbb{F}^*$. Let $z=x+\sqrt{\Delta} y \in\mathbb{E}$ then define $\overline{z}:=x-\sqrt{\Delta}y$. The norm $N: \mathbb{E}^*\rightarrow\mathbb{F}^*$ is defined by $N(z)=z\overline{z}\in \mathbb{F}^*$. Let $\mathbb{E}_1^*$ denote the kernel of the norm map. \par
\begin{rmk}\label{rmk2}
    We can view $\mathbb{E}^*_1$ as a subgroup of $SL_2(\mathbb{F})$ via the embedding $z=x+\sqrt{\Delta} y\mapsto \begin{pmatrix}
        x & \Delta y\\ y & x
    \end{pmatrix}$.
\end{rmk}
Let $B=\bigg\{\begin{pmatrix} a & b \\ 0 & a^{-1} \end{pmatrix}\in SL_2(\mathbb{F})\bigg\}$, $U=\bigg\{\begin{pmatrix} 1 & b \\ 0 & 1 \end{pmatrix}: b\in \mathbb{F}\bigg\}\simeq \mathbb{F}$ and $T=\bigg\{\begin{pmatrix} a & 0 \\ 0 & a^{-1} \end{pmatrix}: a\in \mathbb{F}^*\bigg\}\simeq \mathbb{F}^*$ be the Borel subgroup, its unipotent radical and the diagonal torus, respectively. A character $\alpha: T\cong \mathbb{F}^*\rightarrow\mathbb{C}^*$ can be extended to the character of $B$, whose restriction on $U$ is ${\bf{1}}$. Let $\rho(\alpha)$ denote the induced representation of $B$ to $SL_2(\mathbb{F})$, which is called principal series representation.
\begin{itemize}
    \item If $\alpha={\bf{1}}$ is the trivial character then $\rho({\bf{1}})={\bf{1}}\oplus \bf{St}$, where ${\bf{1}}$ is a trivial representation, and $\bf{St}$ is the Steinberg representation of dimension $q$.
    \item If $\alpha$ is a quadratic character, denoted by $\zeta_e$, then $\rho(\zeta_e)=\rho(\zeta_e)^+\oplus \rho(\zeta_e)^-$, where $\rho(\zeta_e)^{\pm}$ are irreducible representations of dimension $\dfrac{q+1}{2}$. Note that, a non-trivial quadratic character exists and it is unique. Moreover, $\zeta_e(x)=1$ for all $x\in\mathbb{F}^{*2}$ and $\zeta_e(x)=-1$ for all $x\in\mathbb{F}^*\backslash \mathbb{F}^{*2}$.
    \item If $\alpha$ is neither a trivial nor a quadratic character then $\rho(\alpha)$ is an irreducible representation of dimension $q+1$. Moreover, $\rho(\alpha)\sim \rho(\overline{\alpha}) \sim \overline{\rho(\alpha)}$. 
\end{itemize}

Let $\tau:\mathbb{E}^*\rightarrow \mathbb{C}^*$ be a character such that $\tau|_{\mathbb{E}_1^*}$ is a non-trivial character and let $$V_{\tau}=\big\{f:\mathbb{E}\rightarrow\mathbb{C}:\textbf{ }f(zx)=\tau(z)^{-1}f(x), \forall z\in \mathbb{E}^*_1\big\}.$$ The space $V_{\tau}$ is preserved under the special Weil representation (Section 4.1 in \cite{Bump}). The representation $(\omega(\tau),V_{\tau})$ is called a cuspidal representation for a character $\tau$.
\begin{itemize}
    \item If $\tau$ is a quadratic character, denoted by $\zeta_0$ then $\omega(\zeta_0)=\omega(\zeta_0)^+\oplus \omega(\zeta_0)^-$, where $\omega(\zeta_0)^{\pm}$ are irreducible representations of dimension $\dfrac{q-1}{2}$.
    \item If $\tau$ is a nontrivial character such that $\tau\neq \zeta_0$ then $\omega(\tau)$ is an irreducible representation of dimension $q-1$. Moreover, $\omega(\tau)\sim \omega(\overline{\tau})$.
\end{itemize}

Then the character table for $SL_2(\mathbb{F})$ is 

\begin{table}[ht]
    \centering
    \begin{tabular}{|c|c|c|c|c|c|c|c|}
         \hline
		&  & \multirow{-1.5}{3em}{Class Repre.} & $\begin{pmatrix}1 & 0 \\ 0 & 1\end{pmatrix}$ & $\begin{pmatrix}-1 & 0 \\ 0 & -1\end{pmatrix}$ & $(-1)^{\eta}\begin{pmatrix}1 & \gamma \\ 0 & 1 \end{pmatrix}$ & $\begin{pmatrix}x & \Delta y \\ y & x\end{pmatrix}$ & $\begin{pmatrix} x & 0\\ 0 & x^{-1} \end{pmatrix}$\\ 
	   \hline
	    &  & Sizes & $1$ & $1$ & $\frac{q^2-1}{2}$ & $q(q-1)$ & $q(q+1)$ \\
	     \hline 
	    &  &  & 1 & 1 & 4 & $\frac{q-1}{2}$ & $\frac{q-3}{2}$ \\
	   \hline
        Repr. & Dim. &  &  &  & $\substack{\eta\in\{0,1\}\\ \gamma\in\{1,\Delta\}}$ & $z\in\mathbb{E}_1^*-\{\pm1\}$ & $x\neq \pm1 \in \mathbb{F}^*$ \\ [0.5ex]
        \hline\hline
        $\rho(\alpha)$ & $q+1$ & $\frac{q-3}{2}$ & $(q+1)$ & $(q+1)\alpha(-1)$ & $\alpha(-1)^{\eta}$  & $0$ &  $\alpha(x)+\overline{\alpha}(x)$ \\
        \hline
        $\bf{St}$ & $q$ & $1$ & $q$ & $q$ & $0$ & $-1$ & $1$\\
        \hline
        ${\bf{1}}$ & 1 & $1$ & $1$ & $1$ & $1$ & $1$ & $1$ \\
        \hline
        $\omega(\tau)$ & $q-1$ & $\frac{q-1}{2}$ & $(q-1)$ & $(q-1)\tau(-1)$ & $-\tau((-1)^{\eta})$ & $-\tau(z)-\overline{\tau}(z)$ & $0$ \\
        \hline
        $\rho(\zeta_e)^{\pm}$ & $\frac{q+1}{2}$ & $2$ & $\frac{q+1}{2}$ & $\frac{q+1}{2}\zeta_e(-1)$ & $\zeta_e(-1)^{\eta}\zeta_e^{\pm}(\gamma)$ & $0$ & $\zeta_e(x)$ \\
        \hline
        $\omega(\zeta_0)^{\pm}$ & $\frac{q-1}{2}$ & $2$ & $\frac{q-1}{2}$ & $\frac{q-1}{2}\zeta_0(-1)$ & $\zeta_0(-1)^{\eta}\zeta_0^{\pm}(\gamma)$ & $-\zeta_0(z)$ & $0$\\ 
        \hline
    \end{tabular}
    \caption{Character Table}
    \label{character table}
\end{table}

where $\zeta_e^{\pm}(1)=\frac{1}{2}(1\pm\sqrt{q\epsilon})$, $\zeta_e^{\pm}(\Delta)=\frac{1}{2}(1\mp\sqrt{q\epsilon})$, $\zeta_0^{\pm}(1)=\frac{1}{2}(-1\pm\sqrt{q\epsilon})$, $\zeta_0^{\pm}(\Delta)=\frac{1}{2}(-1\mp\sqrt{q\epsilon})$ and $\epsilon=\zeta_e(-1)$. Refer to \cite{Humphreys} for more details.

\begin{rmk}\label{rmk3}
    For $x \in \mathbb{F}^*$, the element $\begin{pmatrix}1 & x \\ 0 & 1 \end{pmatrix}$ is conjugate to either $\begin{pmatrix}1 & 1 \\ 0 & 1 \end{pmatrix}$ if $x \in \mathbb{F}^{*2}$, or to $\begin{pmatrix}1 & \Delta \\ 0 & 1 \end{pmatrix}$ if $x \notin \mathbb{F}^{*2}$.
\end{rmk}

\section{The Character of The Representation on Cusp Forms}\label{sec4}

Since $\Gamma_0(q)$, $\Gamma_1(q)$ and $\Gamma_{\mathbb{E}}(q)$ are subgroups of $SL_2(\mathbb{Z})$ such that they are the inverse image under the natural map $SL_2(\mathbb{Z})\rightarrow SL_2(\mathbb{F})$ of the Borel subgroup, unipotent subgroup, and $\mathbb{E}^*_1$ respectively. Note that$\Gamma_0(q)/\Gamma(q)\cong B$, $\Gamma_1(q)/\Gamma(q)\cong U$ and $\Gamma_{\mathbb{E}}(q)/\Gamma(q)\cong \mathbb{E}_1^*$.
\begin{rmk}\label{rmk4}
    Since $\sigma(-1)f=(-1)^kf$ for all $f\in \mathcal{S}_k(\Gamma(q))$, $\chi_{\sigma}(-g)=(-1)^k\chi_{\sigma}(g)$ for all $g\in SL_2(\mathbb{Z})$.
\end{rmk}
Let $\sigma_{\mathbb{E}}$ be the restriction of $\sigma$ to $\Gamma_{\mathbb{E}}(q)$ or $\mathbb{E}^*_1$. Since $\mathbb{E}^*_1$ is an abelian group, $$\mathcal{S}_k(\Gamma(q))=\bigoplus_{\tau:\mathbb{E}^*_1\rightarrow \mathbb{C}^*}\mathcal{S}_k(\Gamma_{\mathbb{E}}(q),\tau),$$ where $\mathcal{S}_k(\Gamma_{\mathbb{E}}(q),\tau)=\big\{f\in \mathcal{S}_k(\Gamma(q)):\textbf{ }\sigma_{\mathbb{E}}(g)\cdot f=\tau(g)f,\textbf{ }\forall g\in \Gamma_{\mathbb{E}}(q)\big\}$. Since $\chi_{\sigma}(g)=\chi_{\sigma_{\mathbb{E}}}(g)$ for all $g\in \Gamma_{\mathbb{E}}(q)$, 
\begin{equation}\label{eq1}
    \chi_{\sigma}(g)=\sum_{\tau:\mathbb{E}^*_1\rightarrow \mathbb{C}^*}\tau(g) \dim \mathcal{S}_k(\Gamma_{\mathbb{E}}(q),\tau).
\end{equation}

\begin{rmk}\label{rmk5}
    We will say $\tau$ is even (or odd) character if $\tau(-1)=1$ (or $\tau(-1)=-1$ respectively). If $k$ is even (or odd) then $\mathcal{S}_k(\Gamma_{\mathbb{E}}(q),\tau)=0$ for all odd (or even) $\tau$, respectively. So, $$\sum_{\tau:\mathbb{E}^*_1\rightarrow \mathbb{C}^*}\tau(-1) \dim \mathcal{S}_k(\Gamma_{\mathbb{E}}(q),\tau)=(-1)^k \dim \mathcal{S}_k(\Gamma(q)).$$
\end{rmk}
Similarly, let $\sigma_1$ be the restriction of $\sigma$ to $\Gamma_1(q)$ or $U$. Since $U\cong \mathbb{F}$ is an abelian group, $$\mathcal{S}_k(\Gamma(q))=\bigoplus_{n\in\mathbb{F}}\mathcal{S}_k(\Gamma_1(q),\psi_n),$$ where $\mathcal{S}_k(\Gamma_1(q),\psi_n)=\big\{f\in \mathcal{S}_k(\Gamma(q)):\textbf{ }\sigma_1(g)\cdot f=\psi_n(g)f,\textbf{ }\forall g\in \Gamma_1(q)\big\}$. Since $\chi_{\sigma}(g)=\chi_{\sigma_1}(g)$ for all $g\in \Gamma_1(q)$, 
\begin{equation}\label{eq2}
    \chi_{\sigma}(g)=\sum_{n\in\mathbb{F}}\psi_n(g) \dim \mathcal{S}_k(\Gamma_1(q),\psi_n).
\end{equation}
Let $\sigma_0$ be the restriction of $\sigma$ to $\Gamma_0(q)$.
\begin{lem}\label{lem8}
    If $f\in \mathcal{S}_k(\Gamma_1(q),\psi_n)$ and $g=\begin{pmatrix} a& b\\ c&d\end{pmatrix}\in \Gamma_0(q)$ then $\sigma_0(g)\cdot f\in \mathcal{S}_k(\Gamma_1(q),\psi_{nd^2})$.
\end{lem}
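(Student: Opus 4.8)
The plan is to verify directly that $\sigma_0(g)\cdot f$ satisfies the defining eigenvalue condition for membership in $\mathcal{S}_k(\Gamma_1(q),\psi_{nd^2})$. Since $\sigma$ is a representation of $SL_2(\mathbb{Z})$ on $\mathcal{S}_k(\Gamma(q))$, the operator $\sigma_0(g)=\sigma(g)$ preserves $\mathcal{S}_k(\Gamma(q))$, so $\sigma_0(g)\cdot f$ is already a cusp form in $\mathcal{S}_k(\Gamma(q))$ and only its transformation behaviour under $\Gamma_1(q)$ remains to be checked. Recall $\mathcal{S}_k(\Gamma_1(q),\psi_{nd^2})=\{h'\in\mathcal{S}_k(\Gamma(q)):\sigma_1(h)\cdot h'=\psi_{nd^2}(h)h',\ \forall h\in\Gamma_1(q)\}$, so the goal is to show $\sigma_1(h)\cdot(\sigma_0(g)\cdot f)=\psi_{nd^2}(h)\,\sigma_0(g)\cdot f$ for every $h\in\Gamma_1(q)$.

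The key algebraic input is that $\Gamma_1(q)$ is normal in $\Gamma_0(q)$ (indeed $\Gamma_0(q)/\Gamma_1(q)\cong\mathbb{F}^*$ via $g\mapsto d\bmod q$), so $g^{-1}hg\in\Gamma_1(q)$ whenever $h\in\Gamma_1(q)$ and $g\in\Gamma_0(q)$. Using that $\sigma$ is a homomorphism, I would rewrite $\sigma_1(h)\sigma_0(g)=\sigma(hg)=\sigma(g)\,\sigma(g^{-1}hg)$; then, since $g^{-1}hg\in\Gamma_1(q)$ and $f\in\mathcal{S}_k(\Gamma_1(q),\psi_n)$, the eigenvalue property of $f$ gives
$$\sigma_1(h)\cdot\big(\sigma_0(g)\cdot f\big)=\sigma(g)\cdot\big(\psi_n(g^{-1}hg)f\big)=\psi_n(g^{-1}hg)\,\sigma_0(g)\cdot f.$$
This reduces the entire statement to the character identity $\psi_n(g^{-1}hg)=\psi_{nd^2}(h)$ for all $h\in\Gamma_1(q)$.

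That identity is a short computation in $SL_2(\mathbb{F})$. Reducing mod $q$, write $g\equiv\begin{pmatrix}a&b\\0&d\end{pmatrix}$ with $a=d^{-1}$ and $h\equiv\begin{pmatrix}1&m\\0&1\end{pmatrix}$; a direct conjugation yields $g^{-1}hg\equiv\begin{pmatrix}1&md^2\\0&1\end{pmatrix}$, whence $\psi_n(g^{-1}hg)=e^{2\pi\iota n(md^2)/q}=\psi_{nd^2}(h)$. Substituting this into the displayed equation shows $\sigma_0(g)\cdot f$ is a $\psi_{nd^2}$-eigenvector for $\sigma_1$, i.e.\ $\sigma_0(g)\cdot f\in\mathcal{S}_k(\Gamma_1(q),\psi_{nd^2})$. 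No genuine obstacle arises here; the only points requiring care are the direction of conjugation — because $\sigma$ is defined through $g^{-1}$, one must be sure the relevant conjugate is $g^{-1}hg$ and not $ghg^{-1}$ — and the fact that $\psi_n$ on $\Gamma_1(q)$ factors through $\Gamma_1(q)/\Gamma(q)\cong U$, which legitimizes reading off the character from the mod-$q$ reduction.
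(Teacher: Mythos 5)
Your proposal is correct and follows essentially the same route as the paper: both use the factorization $\sigma_1(h)\sigma_0(g)=\sigma(g)\sigma(g^{-1}hg)$, the normality of $\Gamma_1(q)$ in $\Gamma_0(q)$, and the explicit conjugation computation $g^{-1}hg\equiv\begin{pmatrix}1&md^2\\0&1\end{pmatrix}\pmod q$ to conclude $\psi_n(g^{-1}hg)=\psi_{nd^2}(h)$. Your write-up is if anything slightly more careful than the paper's, spelling out why the mod-$q$ reduction suffices.
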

\begin{proof}
    Let $\gamma=\begin{pmatrix} x& y\\ z&w\end{pmatrix}\in \Gamma_1(q)$ then $\sigma_1(\gamma)\cdot f=\psi_n(y)f=e^{2\pi\iota\frac{ny}{q}}f$. Now, $$\sigma_1(\gamma)\sigma_0(g)\cdot f= \sigma(\gamma g)\cdot f= \sigma_0(g)\sigma_1(g^{-1}\gamma g)\cdot f=\psi_n(g^{-1}\gamma g)\sigma_0(g)\cdot f=\psi_n(d^2y)\sigma_0(g)\cdot f=\psi_{nd^2}(y)\sigma_0(g)\cdot f.$$
\end{proof}
\begin{lem}\label{lem9}
    Let $g=\begin{pmatrix} a& b\\ c&d\end{pmatrix}\in \Gamma_0(q)$ such that $d\not\equiv\pm 1 (\text{mod } q)$. Then 
    \begin{equation}\label{eq3}
    \chi_{\sigma}(g)=\sum_{\phi:\mathbb{F}^*\rightarrow \mathbb{C}^*}\phi(d) \dim \mathcal{S}_k(\Gamma_0(q),\phi).
\end{equation}
\end{lem}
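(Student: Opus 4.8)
The plan is to exploit the hypothesis $d\not\equiv\pm1\pmod q$ to replace $g$ by a split torus element, and then to localize the trace of $\sigma(g)$ onto the unipotent-invariant subspace, where the torus acts through the nebentypus.

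First I would reduce to a diagonal element. Since $c\equiv 0$ and $ad\equiv 1\pmod q$, the image $\overline{g}\in B$ equals $\begin{pmatrix} d^{-1} & b \\ 0 & d\end{pmatrix}$. Conjugating by $\begin{pmatrix} 1 & t \\ 0 & 1\end{pmatrix}$ changes the upper-right entry by a multiple of $(d-d^{-1})$, which is nonzero precisely because $d\not\equiv\pm1$; hence $\overline{g}$ is $B$-conjugate to the torus element $\begin{pmatrix} d^{-1} & 0 \\ 0 & d\end{pmatrix}$. As $\chi_{\sigma}$ is a class function on $SL_2(\mathbb{F})$, we may replace $g$ by any lift $t\in\Gamma_0(q)$ of this diagonal element (so that $t\equiv\operatorname{diag}(d^{-1},d)\pmod q$), giving $\chi_{\sigma}(g)=\operatorname{tr}\big(\sigma_0(t)\big)$.

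Next I would localize the trace using the $U$-eigenspace decomposition \eqref{eq2}, namely $\mathcal{S}_k(\Gamma(q))=\bigoplus_{n\in\mathbb{F}}\mathcal{S}_k(\Gamma_1(q),\psi_n)$. By Lemma \ref{lem8}, $\sigma_0(t)$ carries the $\psi_n$-summand into the $\psi_{nd^2}$-summand. Since $d^2\not\equiv 1$, the permutation $n\mapsto nd^2$ of $\mathbb{F}$ has $n=0$ as its unique fixed point, so every off-diagonal block contributes nothing to the trace and
\[
\chi_{\sigma}(g)=\operatorname{tr}\big(\sigma_0(t)\big|_{\mathcal{S}_k(\Gamma_1(q),\psi_0)}\big).
\]
The summand $\mathcal{S}_k(\Gamma_1(q),\psi_0)$ is exactly the space of $U$-invariants, i.e. the cusp forms of level $\Gamma_1(q)$, on which $T\cong\Gamma_0(q)/\Gamma_1(q)\cong\mathbb{F}^*$ acts and produces the nebentypus decomposition $\mathcal{S}_k(\Gamma_1(q))=\bigoplus_{\phi}\mathcal{S}_k(\Gamma_0(q),\phi)$. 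On the $\phi$-summand the element $t$ acts by the scalar $\phi(d)$, whence the restricted trace equals $\sum_{\phi}\phi(d)\dim\mathcal{S}_k(\Gamma_0(q),\phi)$, which is \eqref{eq3}.

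The step I expect to be most delicate is the localization of the trace: one must verify that $\sigma_0(t)$ genuinely permutes the $\psi_n$-summands as dictated by Lemma \ref{lem8} and that, because this permutation fixes only $n=0$, the trace collapses onto the $\psi_0$-block. The remaining identifications — that the $\psi_0$-summand is $\mathcal{S}_k(\Gamma_1(q))$ and that $T$ acts on it through the nebentypus — are formal consequences of the normality relations $\Gamma_1(q)\triangleleft\Gamma_0(q)$ and $U\triangleleft B$ already recorded in the paper.
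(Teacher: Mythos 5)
Your argument is correct and is essentially the paper's own proof: both localize the trace of $\sigma_0(g)$ onto the $\psi_0$-block via Lemma \ref{lem8} (the permutation $n\mapsto nd^2$ fixes only $n=0$ since $d\not\equiv\pm1$) and then read off the trace from the nebentypus decomposition $\mathcal{S}_k(\Gamma_1(q))=\bigoplus_{\phi}\mathcal{S}_k(\Gamma_0(q),\phi)$. Your preliminary conjugation of $g$ to a diagonal lift is valid but unnecessary, since $\phi$ is already a character of $\Gamma_0(q)/\Gamma_1(q)\cong\mathbb{F}^*$ sending $g\mapsto d$, so $g$ itself acts by $\phi(d)$ on each $\phi$-summand.
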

\begin{proof}
    Since $\mathcal{S}_k(\Gamma(q))=\bigoplus\mathcal{S}_k(\Gamma_1(q),\psi_n)$, we have a basis of $\mathcal{S}_k(\Gamma(q))$ as an union of bases of $\mathcal{S}_k(\Gamma_1(q),\psi_n)$. By using Lemma \ref{lem8}, an element of basis of $\mathcal{S}_k(\Gamma_1(q),\psi_n)$ contributes to the trace of $\sigma(g)=\sigma_0(g)$ if and only if $n=0\in\mathbb{F}$. So, $\chi_{\sigma_0}(g)=\chi_{\widetilde{\sigma_0}}(g)$ where $\widetilde{\sigma_0}:\Gamma_0(q)\rightarrow GL(\mathcal{S}_k(\Gamma_1(q)))$. Since $\Gamma_1(q)\subset \ker(\widetilde{\sigma_0})$ and $\Gamma_0(q)/\Gamma_1(q)\cong \mathbb{F}^*$ is an abelian group,$$\mathcal{S}_k(\Gamma_1(q))=\bigoplus_{\phi:\mathbb{F}^*\rightarrow \mathbb{C}^*}\mathcal{S}_k(\Gamma_0(q),\phi),$$where $\mathcal{S}_k(\Gamma_0(q),\phi)=\big\{f\in \mathcal{S}_k(\Gamma_1(q)):\textbf{ }\sigma_0(x)\cdot f=\phi(x)f,\textbf{ }\forall x\in \Gamma_1(q)\big\}$. So, the lemma follows.
\end{proof}
\begin{rmk}\label{rmk6}
    We will say $\phi$ is even (or odd) character if $\phi(-1)=1$ (or $\phi(-1)=-1$ respectively). If $k$ is even (or odd) then $\mathcal{S}_k(\Gamma_0(q),\phi)=0$ for all odd (or even) $\phi$, respectively. So, $$\sum_{\phi:\mathbb{F}^*\rightarrow \mathbb{C}^*}\phi(-1) \dim \mathcal{S}_k(\Gamma_0(q),\phi)=(-1)^k \dim \mathcal{S}_k(\Gamma_1(q)).$$
\end{rmk}
The character function of the representation $\sigma$ is:
\begin{table}[ht]
    \centering
    \begin{tabular}{|c|c|c|c|}
         \hline
        & {Class Representative} & size & $\chi_{\sigma}$ \\
    \hline
        $1$ & $\begin{pmatrix} 1& 0\\ 0&1 \end{pmatrix}$ & $1$ & $\dim \mathcal{S}_k(\Gamma(q))$ \\
    \hline
        $1$ & $\begin{pmatrix} -1& 0\\ 0&-1 \end{pmatrix}$ & $1$ & $(-1)^k\dim \mathcal{S}_k(\Gamma(q))$ \\
    \hline
        $1$ & $\begin{pmatrix} 1& 1\\ 0&1 \end{pmatrix}$ & $\dfrac{q^2-1}{2}$ & $\sum\limits_{n\in\mathbb{F}}\psi_n(1)\dim \mathcal{S}_k(\Gamma_1(q),\psi_n)$ \\
    \hline
        $1$ & $\begin{pmatrix} 1& \Delta\\ 0&1\end{pmatrix}$ & $\dfrac{q^2-1}{2}$ & $\sum\limits_{n\in\mathbb{F}}\psi_n(\Delta)\dim \mathcal{S}_k(\Gamma_1(q),\psi_n)$ \\
    \hline
        $1$ & $\begin{pmatrix} -1& -1\\ 0&-1\end{pmatrix}$ & $\dfrac{q^2-1}{2}$ & $(-1)^k\sum\limits_{n\in\mathbb{F}}\psi_n(1)\dim \mathcal{S}_k(\Gamma_1(q),\psi_n)$ \\
    \hline
        $1$ & $\begin{pmatrix} -1& -\Delta\\ 0&-1\end{pmatrix}$ & $\dfrac{q^2-1}{2}$ & $(-1)^k\sum\limits_{n\in\mathbb{F}}\psi_n(\Delta)\dim \mathcal{S}_k(\Gamma_1(q),\psi_n)$ \\
    \hline
        $\dfrac{q-3}{2}$ & $\begin{pmatrix} x^{-1}& 0\\ 0&x\end{pmatrix}$ & $q(q+1)$ & $\sum\limits_{\phi:\mathbb{F}^*\rightarrow\mathbb{C}^*}\phi(x)\dim \mathcal{S}_k(\Gamma_0(q),\phi)$ \\
    \hline
        $\dfrac{q-1}{2}$ & $\begin{pmatrix} x& \Delta y\\ y&x\end{pmatrix}$ & $q(q-1)$ & $\sum\limits_{\tau:\mathbb{E}_1^*\rightarrow \mathbb{C}^*}\tau\bigg(\begin{pmatrix} x& \Delta y\\ y&x\end{pmatrix}\bigg)\dim \mathcal{S}_k(\Gamma_{\mathbb{E}}(q),\tau)$. \\
    \hline    
    \end{tabular}
    \caption{Character of $\sigma$}
    \label{table}
\end{table}

\section{Cusp forms with values in principal series representations}

We will find the dimension of $\mathcal{S}_k(SL_2(\mathbb{Z}),\rho(\alpha_0))$, where $\rho(\alpha_0)$ is the parabolically induced representation for a character $\alpha_0:\mathbb{F}^*\rightarrow \mathbb{C}^*$.

Let $\mathcal{F}: \mathcal{S}_k(\Gamma_0(q),\alpha)\rightarrow \mathcal{S}_k(SL_2(\mathbb{Z}),\rho(\alpha))$ be a linear map defined as $[\mathcal{F}f(z)](x)=j(\Tilde{x},z)^{-k}f(\Tilde{x}z)$ for any $z\in \mathcal{H}$ and $\Tilde{x}\in SL_2(\mathbb{Z})$ is a pre-image of $x\in SL_2(\mathbb{F})$. The map is independent of the choice of $\Tilde{x}$ since for any $x_1,x_2$ be pre-images of $x$ then $x_1x_2^{-1}\in \Gamma(q)$ and $$j(x_1,z)^{-k}f(x_1z)=j(x_1x_2^{-1},x_2z)^{-k}j(x_2,z)^{-k}f(x_1x_2^{-1}x_2z)=j(x_2,z)^{-k}f(x_2z).$$ So, we will use $x$ instead of $\Tilde{x}$. To show the map is well-defined we need to show that $\mathcal{F}f(z)\in\rho(\alpha)$ and $\mathcal{F}(f)$ satisfies the automorphy condition. For $b\in B$, $x\in SL_2(\mathbb{F})$, $z\in\mathcal{H}$ and $g\in SL_2(\mathbb{Z})$,
\begin{align*}
    [\mathcal{F}f(z)](bx) & =j(bx,z)^{-k}f(bxz)=j(bx,z)^{-k}\alpha(b)j(b,xz)^kf(xz)=\alpha(b)j(x,z)^{-k}f(xz)=\alpha(b)[\mathcal{F}f(z)](x),\\ [\mathcal{F}f(gz)](x) & =j(x,gz)^{-k}f(xgz)=j(g,z)^kj(xg,z)^{-k}f(xgz) \\ & =j(g,z)^k[\mathcal{F}f(z)](xg)=j(x,z)^k[\rho(\alpha)(g)\cdot \mathcal{F}f(z)](x).
\end{align*}

%$$[\mathcal{F}f(z)](bx)=j(bx,z)^{-k}f(bxz)=j(bx,z)^{-k}\alpha(b)j(b,xz)^kf(xz)=\alpha(b)j(x,z)^{-k}f(xz)=\alpha(b)[\mathcal{F}f(z)](x),$$  $$[\mathcal{F}f(gz)](x)=j(x,gz)^{-k}f(xgz)=j(g,z)^kj(xg,z)^{-k}f(xgz)=j(x,z)^k[\mathcal{F}f(z)](xg)=j(x,z)^k[\rho(\alpha)(g)\cdot \mathcal{F}f(z)](x).$$

Let $\mathcal{G}:\mathcal{S}_k(SL_2(\mathbb{Z}),\rho(\alpha))\rightarrow \mathcal{S}_k(\Gamma_0(q),\alpha)$ be a linear map defined as $\mathcal{G}f(z):=[f(z)](1)$. The map is well-defined since for $b\in \Gamma_0(q)$, $$\mathcal{G}f(bz)=f(bz)(1)=j(b,z)^k[\rho(\alpha)(b)\cdot f(z)](1)=j(b,z)^k[f(z)](b)=j(b,z)^k\alpha(b)[f(z)](1)=j(b,z)^k\alpha(b)\mathcal{G}f(z).$$

To complete a proof of Theorem \ref{theo1}, we need to show that $\mathcal{G}$ is an inverse of $\mathcal{F}$, which is easy to see. This proof is an effect of Shapiro's lemma on parabolic group cohomology, isomorphic to the space of cusp forms via the Eichler-Shimura isomorphism. For the Steinberg representation, let $\alpha={\bf{1}}$ then $\mathcal{S}_k(SL_2(\mathbb{Z}),\rho({\bf{1}}))=\mathcal{S}_k(SL_2(\mathbb{Z}),{\bf{St}})\oplus \mathcal{S}_k(SL_2(\mathbb{Z}))$, and so $$\dim \mathcal{S}_k(SL_2(\mathbb{Z}),{\bf{St}})= \dim \mathcal{S}_k(\Gamma_0(q)) - \dim \mathcal{S}_k(SL_2(\mathbb{Z})).$$

\subsection{Quadratic Case}
For the quadratic character $\zeta_e$ on $\mathbb{{F}}^*$, the parabolically induced representation $\rho(\zeta_e)$ is a direct sum of two irreducible subrepresentation $\rho(\zeta_e)^{\pm}$. So, $$\mathcal{S}_k(SL_2(\mathbb{Z}),\rho(\zeta_e))=\mathcal{S}_k(SL_2(\mathbb{Z}),\rho(\zeta_e)^+)\oplus \mathcal{S}_k(SL_2(\mathbb{Z}),\rho(\zeta_e)^-).$$
We will use the character formula to find the dimensions of $\mathcal{S}_k(SL_2(\mathbb{Z}),\rho(\zeta_e)^{\pm})$. By using the character tables \ref{character table}, \ref{table}, Equation \ref{eq2} and Remark \ref{rmk3},
 
\begin{align*}
   \dim \mathcal{S}_k(SL_2(\mathbb{Z}),\rho(\zeta_e)^{\pm}) = &\textbf{ } \frac{\zeta_e(1)}{2q(q-1)}\dim \mathcal{S}_k(\Gamma(q))+\frac{\zeta_e(-1)}{2q(q-1)}(-1)^k\dim \mathcal{S}_k(\Gamma(q)) \\ & + \frac{1}{q(q-1)}\sum_{x\in\mathbb{F}^*}\frac{1}{2}(1\pm\zeta_e(x)\sqrt{q\zeta_e(-1)})\sum\limits_{n\in\mathbb{F}}\psi_n(x)\dim \mathcal{S}_k(\Gamma_1(q),\psi_n) \\ & + \frac{1}{q(q-1)}\sum_{x\in\mathbb{F}^*}\zeta_e(-1)\frac{1}{2}(1\pm\zeta_e(x)\sqrt{q\zeta_e(-1)})(-1)^k\sum\limits_{n\in\mathbb{F}}\psi_n(x)\dim \mathcal{S}_k(\Gamma_1(q),\psi_n) \\ & + \frac{1}{2(q-1)}\sum_{x\neq\pm1}\zeta_e(x)\sum\limits_{\phi:\mathbb{F}^*\rightarrow\mathbb{C}^*}\phi(x)\dim \mathcal{S}_k(\Gamma_0(q),\phi)
\end{align*}

If $\zeta_e(-1)\neq (-1)^k$ then $\mathcal{S}_k(SL_2(\mathbb{Z}),\rho(\zeta_e)^{\pm})=0$. So assume, $\zeta_e(-1)=(-1)^k$. 

\begin{align*}
   \dim \mathcal{S}_k(SL_2(\mathbb{Z}),\rho(\zeta_e)^{\pm}) = &\textbf{ } \frac{1}{q(q-1)}\dim \mathcal{S}_k(\Gamma(q)) + \frac{1}{q(q-1)}\sum\limits_{n\in\mathbb{F}}\bigg(\sum_{x\in\mathbb{F}^*}\psi_n(x)\bigg)\dim \mathcal{S}_k(\Gamma_1(q),\psi_n) \\ & \pm \frac{\sqrt{q\zeta_e(-1)}}{q(q-1)}\sum\limits_{n\in\mathbb{F}}\bigg(\sum_{x\in\mathbb{F}^*}\zeta_e(x)\psi_n(x)\bigg)\dim \mathcal{S}_k(\Gamma_1(q),\psi_n) \\ & + \frac{1}{2(q-1)}\sum\limits_{\phi:\mathbb{F}^*\rightarrow\mathbb{C}^*}\bigg(\sum_{x\in\mathbb{F}^*}\zeta_e(x)\phi(x)\bigg)\dim \mathcal{S}_k(\Gamma_0(q),\phi) \\ & - \frac{1}{2(q-1)}\sum_{x=\pm1}\zeta_e(x)\sum\limits_{\phi:\mathbb{F}^*\rightarrow\mathbb{C}^*}\phi(x)\dim \mathcal{S}_k(\Gamma_0(q),\phi)
\end{align*}

For a fixed character $\phi$ on $\mathbb{F}^*$, the sum $\sum_{x\in\mathbb{F}^*}\zeta_e(x)\phi(x)$ is equal to $0$ unless $\phi=\zeta_e$. Similarly, for a fixed $n\in\mathbb{{F}}$, the sum $\sum_{x\in\mathbb{F}}\psi_n(x)$ is equal to $0$ unless $n=0$. Note that the sum $\sum_{x\in\mathbb{F}^*}\zeta_e(x)\psi_n(x)$ is the quadratic Gauss sum if $n\neq 0$, and so it is equal to $\zeta_e(n)\sqrt{q\zeta_e(-1)}$ (Chapter 6 in \cite{Ireland}). Note that we are following the usual convention that the sign of $\sqrt{x}$ is positive. We conclude the proof of Proposition \ref{prop3} by using Remark \ref{rmk6},
\begin{align*}
    \dim \mathcal{S}_k(SL_2(\mathbb{Z}),\rho(\zeta_e)^{\pm}) = \frac{1}{2}\dim \mathcal{S}_k(\Gamma_0(q),\zeta_e)\pm \frac{\zeta_e(-1)}{q-1}\sum_{n\in\mathbb{{F}^*}} \zeta_e(n)\mathcal{S}_k(\Gamma_1(q),\psi_n).
\end{align*}

\section{Cusp forms with values cuspidal representations}

We will find the dimension of $\mathcal{S}_k(SL_2(\mathbb{Z}),\omega(\tau_0))$ for a character $\tau_0:\mathbb{E}_1^*\rightarrow \mathbb{C}^*$ by using the character formula. If $\tau_0(-1)\neq (-1)^k$ then $\mathcal{S}_k(SL_2(\mathbb{Z}),\omega(\tau_0))=0$. So, we assume that $\tau_0(-1)=(-1)^k$. By using Character tables \ref{character table},\ref{table} and Remark \ref{rmk3},

\begin{align*}
   \dim \mathcal{S}_k(SL_2(\mathbb{Z}),\omega(\tau_0)) = &\textbf{ } \frac{\tau_0(1)}{q(q+1)}\dim \mathcal{S}_k(\Gamma(q))+\frac{\tau_0(-1)}{q(q+1)}(-1)^k\dim \mathcal{S}_k(\Gamma(q)) \\ & + \frac{1}{q(q-1)}\sum\limits_{x\in\mathbb{{F}^*}}(-\tau_0(1))\sum\limits_{n\in\mathbb{F}}\psi_n(x)\dim \mathcal{S}_k(\Gamma_1(q),\psi_n) \\ & + \frac{1}{q(q-1)}\sum\limits_{x\in\mathbb{{F}^*}}(-\tau_0(-1))(-1)^k\sum\limits_{n\in\mathbb{F}}\psi_n(x)\dim \mathcal{S}_k(\Gamma_1(q),\psi_n) \\ & + \frac{1}{2(q+1)}\sum_{z\neq\pm1}-(\tau_0(z)+\overline{\tau_0}(z))\sum\limits_{\tau:\mathbb{E}_1^*\rightarrow\mathbb{C}^*}\tau(z)\dim \mathcal{S}_k(\Gamma_{\mathbb{E}}(q),\tau) \\
    \dim \mathcal{S}_k(SL_2(\mathbb{Z}),\omega(\tau_0)) = &\textbf{ } \frac{2}{q(q+1)}\dim \mathcal{S}_k(\Gamma(q))+ \frac{1}{2(q+1)}\sum_{z=\pm 1}2\tau_0(z)\sum\limits_{\tau:\mathbb{E}_1^*\rightarrow\mathbb{C}^*}\tau(z)\dim \mathcal{S}_k(\Gamma_{\mathbb{E}}(q),\tau) \\ & - \frac{2}{q(q-1)}\sum_{n\in \mathbb{{F}}}\bigg(\sum_{x\in\mathbb{F}^*}\psi_n(x)\bigg)\dim \mathcal{S}_k(\Gamma_1(q),\psi_n) \\ & - \frac{1}{2(q+1)}\sum_{z\in\mathbb{E}_1^*}(\tau_0(z)+\overline{\tau_0}(z))\sum\limits_{\tau:\mathbb{E}_1^*\rightarrow\mathbb{C}^*}\tau(z)\dim \mathcal{S}_k(\Gamma_{\mathbb{E}}(q),\tau).
\end{align*}

For a fixed character $\tau$ on $\mathbb{E}_1^*$, the sum $\sum_{z\in\mathbb{E}_1^*}\tau(z)\tau_0(z)$ is equal to $0$ unless $\tau=\overline{\tau_0}$. Similarly for any $n\in\mathbb{F}$, the sum $\sum_{x\in\mathbb{F}}\psi_n(x)$ is equal to $0$ unless $n=0$. We get the following equality by using Remark \ref{rmk5}.

\begin{align*}
    \dim \mathcal{S}_k(SL_2(\mathbb{Z}),\omega(\tau_0)) =  \textbf{ } \frac{2}{q-1}\dim \mathcal{S}_k(\Gamma(q))-\frac{2}{q-1}\dim \mathcal{S}_k(\Gamma_1(q)) - \frac{1}{2}\dim \mathcal{S}_k(\Gamma_{\mathbb{E}}(q),\overline{\tau_0}) - \frac{1}{2}\dim \mathcal{S}_k(\Gamma_{\mathbb{E}}(q),\tau_0).
\end{align*}

We prove Theorem \ref{theo4} in the case of non-quadratic character on $\mathbb{E}_1^*$ by using the following explicit formula of dimensions of $\mathcal{S}_k(\Gamma(q))$ and $\mathcal{S}_k(\Gamma_1(q))$ (Figure 3.4 in \cite{diamond}). For $k\geq3$, 
\begin{align*}
    \dim \mathcal{S}_k(\Gamma_1(q)) = & \textbf{ }\frac{k-1}{24}(q^2-1)-\frac{1}{2}(q-1)\\ 
    \dim \mathcal{S}_k(\Gamma(q)) = & \textbf{ }\frac{k-1}{24}q(q^2-1)-\frac{1}{4}(q^2-1).
\end{align*}

\subsection{Quadratic Case}
If $\tau_0=\zeta_0$ is a quadratic character on $\mathbb{E}_1^*$ then the special Weil representation $\omega(\zeta_0)=\omega(\zeta_0)^+\oplus \omega(\zeta_0)^-$ as irreducible representations. So, $\mathcal{S}_k(SL_2(\mathbb{Z}),\omega(\zeta_0))=\mathcal{S}_k(SL_2(\mathbb{Z}),\omega(\zeta_0)^+)\oplus\mathcal{S}_k(SL_2(\mathbb{Z}),\omega(\zeta_0)^-)$. We follow the same method to get Proposition \ref{prop5}.

%\begin{align*}
%    \dim \mathcal{S}_k(SL_2(\mathbb{Z}),\omega(\zeta_0)^{\pm}) = &\textbf{ }  \textbf{ } \frac{1}{q-1}\dim \mathcal{S}_k(\Gamma(q))-\frac{1}{q-1}\dim \mathcal{S}_k(\Gamma_1(q)) - \frac{1}{2}\dim \mathcal{S}_k(\Gamma_{\mathbb{E}}(q),\zeta_0) \\ & \textbf{ } \textbf{ } \pm \frac{\zeta_e(-1)}{q-1}\sum_{n\in\mathbb{{F}^*}} \zeta_e(n)\mathcal{S}_k(\Gamma_1(q),\psi_n).
%\end{align*}

\pagebreak

\bibliographystyle{unsrt}  
%\bibliography{references}  %%% Remove comment to use the external .bib file (using bibtex).
%%% and comment out the ``thebibliography'' section.

%%% Comment out this section when you \bibliography{references} is enabled.

\begin{thebibliography}{1}

\bibitem{Bump}
Bump, Daniel.
\newblock Automorphic forms and representations.
\newblock Cambridge Studies in Advanced Mathematics, Volume 55. Cambridge University Press, Cambridge, 1997.

\bibitem{cohen}
Cohen, H. and Oesterl\'{e}, J.
\newblock Dimensions des espaces de formes modulaires.
\newblock  Modular Functions of One Variable, VI (Bonn, 1976), Lecture Notes in Math. 627, Springer, Berlin, 1977.

\bibitem{diamond}
Diamond, Fred and Shurman, Jerry.
\newblock A first course in modular forms.
\newblock Graduate Texts in Mathematics, Volume 228, Springer-Verlag, New York, 2005.

\bibitem{Fulton}
Fulton, William and Harris, Joe.
\newblock Representation theory.
\newblock Graduate Texts in Mathematics, Volume 129, Springer-Verlag, New York, 1991.

\bibitem{Humphreys}
Humphreys, J. E.,
\newblock Representations of {${\rm SL}(2,p)$},
\newblock Amer. Math. Monthly, Volume 82, Pages 21--39, 1975. ISSN= 0002-9890,1930-0972.

\bibitem{Ireland}
Ireland, Kenneth and Rosen, Michael
\newblock A classical introduction to modern number theory,
\newblock Graduate Texts in Mathematics, Volume 84, Springer-Verlag, New York, 1990.

\bibitem{Quer}
Quer, Jordi.
\newblock Dimensions of spaces of modular forms for {$\Gamma_H(N)$}.
\newblock Acta Arithmetica, Volume 145, 2010. 

\bibitem{SHI}
Shimura, Goro.
\newblock Introduction to the arithmetic theory of automorphic functions.
\newblock Princeton University Press, Princeton, NJ, 1994.

\bibitem{Stein}
Stein, William.
\newblock Modular forms, a computational approach.
\newblock Graduate Studies in Mathematics, Volume 79. American Mathematical Society, Providence, RI, 2007.


\end{thebibliography}

\end{document}